\newtheorem{LinkIff}{Proposition}[section]
\newtheorem{DisToCts}[LinkIff]{Proposition}
\newtheorem{PositiveTranslation}[LinkIff]{Theorem}
\newtheorem{Definition:Cogenerator}[LinkIff]{Definition}
\newtheorem{HaakLemma}[LinkIff]{Lemma}
\newtheorem{StrongOperatorConvergence}[LinkIff]{Lemma}
\newtheorem{StrongConvergenceContraction}[LinkIff]{Lemma}
\newtheorem{ResolventLink}[LinkIff]{Proposition}
\newtheorem{PositiveTranslation2}[LinkIff]{Theorem}
\newtheorem{RightShiftCounter}[LinkIff]{Theorem}
\begin{document}
\title{$\alpha$-admissibility of the right-shift semigroup on $L^2(\mathbb{R}_+)$}

\author{Andrew Wynn}
\email{andrew.wynn@sjc.ox.ac.uk}
\address{St John's College, Oxford, OX1 3JP}
\subjclass[2000]{32A35, 32A36, 47D06}
\keywords{Admissibility, $C_0$-semigroups, Bergman spaces}

\begin{abstract}
It is shown that the right shift semigroup on $L^2(\mathbb{R}_+)$ does not satisfy the weighted Weiss conjecture for $\alpha \in (0,1)$. In other words, $\alpha$-admissibility of scalar valued observation operators cannot always be characterised by a simple resolvent growth condition. This result is in contrast to the unweighted case, where $0$-admissibility can be characterised by a simple growth bound. The result is proved by providing a link between discrete and continuous $\alpha$-admissibility and then translating a counterexample for the unilateral shift on $H^2(\mathbb{D})$ to continuous time systems. 
\end{abstract}

\maketitle
\section{Introduction}
In this paper infinite dimensional linear systems of the form 
\begin{equation} \label{continuoussystem}
\begin{array}{cclllc}
\dot x(t) &= &Ax(t), && x(0)=x_0, &t\geq 0,\\
y(t)&=&Cx(t),  &&t \geq 0,
\end{array}
\end{equation}
are studied. The operator $A$ is the generator of a $C_0$-semigroup on a Hilbert space $X$ and $C \in \mathcal{L}(D(A),Y)$ is an observation operator taking values in another Hilbert space $Y$. The reason for considering this abstract framework is that many different linear systems can be written in the form (\ref{continuoussystem}) and hence, results proved about the abstract system have the potential to be applied to a wide range of examples. Indeed, there is much literature describing how certain PDEs can be written in this abstract form, for example, see \cite{Pritchard, Salamon, Staffans, WeissSystems}. 

In order to be able to consider a larger class of linear systems, the observation operator $C$ may be assumed to be unbounded. The assumption $C \in \mathcal{L}(D(A),Y)$ means that there exists a constant $k>0$ such that 
\begin{equation*}
\|Cx\|_Y \leq k (\|x\|_X + \|Ax\|_X), \qquad x \in D(A).
\end{equation*}
Unfortunately, allowing $C$ to be unbounded means that the output map $(y(t))_{t\geq0}$ may not be well defined, since the solution $x(t)=T(t)x_0$ to (\ref{continuoussystem}) may not always be contained in $D(A)$. To avoid this situation it is often assumed that $C$ is \textit{admissible} for $A$ in the sense that there exists a constant $M>0$ such that
\begin{equation}
\|CT(\cdot)x_0\|_{L^2(\mathbb{R}_+,Y)} \leq M \|x_0\|_X, \qquad x_0 \in D(A).
\end{equation}
If $C$ is admissible for $A$ then the map $\Psi:D(A) \rightarrow L^2(\mathbb{R}_+,Y)$ given by $(\Psi x_0)(t):=CT(t)x_0, t\geq 0$ extends to a bounded linear operator on the whole space $X$ and if this is the case the output map can be interpreted as $y=\Psi x_0$. Admissibility of observation operators is a well studied concept (see e.g. \cite{Pritchard, Staffans, WeissIsrael}) and an excellent overview of the subject can be found in \cite{PartingtonReview}.

In \cite{HaakLeMerdy} a generalisation of admissibility is introduced. An observation operator $C$ is said to be $\alpha$-admissible for $A$ if there exists a constant $M>0$ such that 
\begin{equation} \label{CtsAdmis}
\int_0^\infty t^\alpha \|CT(t)x_0\|_Y^2 dt \leq M^2 \|x_0\|_X^2, \qquad x_0 \in D(A). 
\end{equation}
It is shown in \cite{HaakLeMerdy}, see also \cite{HaakKunst1}, that if $C$ is $\alpha$-admissible for $A$ then
\begin{equation} \label{CtsRes}
\sup_{\lambda \in \mathbb{C}_+} (\text{Re} \lambda)^{\frac{1-\alpha}{2}} \|CR(\lambda,A)\|_{\mathcal{L}(X,Y)} < \infty.
\end{equation}
This observation was first made in the case $\alpha=0$ by George Weiss in \cite{WeissTwoConjectures} where he conjectured that the converse statement ((\ref{CtsRes}) $\Rightarrow$ (\ref{CtsAdmis}) for $\alpha=0$) was also true. This problem received much attention and was shown to be false in general \cite{Harper2, RightShiftCounter, WeakZwart, JacobCounterexample}, but true in a number of interesting cases. For example, in the case $\alpha=0$, if $C$ is a scalar valued observation operator then it has been shown that (\ref{CtsRes}) $\Rightarrow$ (\ref{CtsAdmis}) if: $A$ is a normal operator generating a bounded $C_0$-semigroup \cite{WeissPowerful}; $A$ generates the right-shift semigroup  \cite{RightShift} on $L^2(\mathbb{R}_+)$; and if $A$ generates a contractive $C_0$-semigroup \cite{Contractions}. 

For $\alpha \in (-1,1)$, the question of whether (\ref{CtsRes}) implies (\ref{CtsAdmis}) is known as the {\em weighted Weiss conjecture}. It was shown in \cite{HaakLeMerdy} that if $A$ generates a bounded analytic $C_0$-semigroup, then $\alpha$-admissibility of any observation operator $C \in \mathcal{L}(D(A),Y)$ is equivalent to (\ref{CtsRes}) if and only if $A^{\frac12}:D(A) \rightarrow X$ is $0$-admissible for $A$. However, in the non-analytic case the behaviour of the problem can depend on the parameter $\alpha$. If $A$ is a normal operator generating a bounded $C_0$-semigroup then: if $\alpha \in (0,1)$, properties (\ref{CtsAdmis}) and (\ref{CtsRes}) are equivalent for any scalar valued observation operator \cite{Wynn1}; but for $\alpha \in (-1,0)$, there exist normal semigroup generators and scalar valued observation operators \cite{Wynn2} for which (\ref{CtsRes}) $\not \Rightarrow$ (\ref{CtsAdmis}).

The main result of this paper, Theorem \ref{6:RightShiftCounter}, concerns the right shift semigroup on $L^2(\mathbb{R}_+)$ given by
\begin{equation} \label{RShiftIntro}
(S(t)f)(\tau):= \left\{ \begin{array}{lcccc} f(\tau-t), & \tau \geq t;\\ 0, & \tau < t; \end{array} \right., \qquad f \in L^2(\mathbb{R}_+).
\end{equation}
There are two important reasons for studying admissibility of (\ref{RShiftIntro}). First, the behaviour of the weighted Weiss conjecture is known for normal operators \cite{Wynn1,Wynn2} and (\ref{RShiftIntro}) is one of the simplest {\em non-normal} operators. Second, the fact that (\ref{RShiftIntro}) satisfies the unweighted Weiss conjecure (i.e. (\ref{CtsRes}) $\Rightarrow$ (\ref{CtsAdmis}) if $\alpha=0$) is the vital step in proving that the unweighed Weiss conjecture is true for {\em all} contractive $C_0$-semigroups \cite{Contractions}. 

For $\alpha \in (0,1)$, the behaviour of the weighted Weiss conjecture for (\ref{RShiftIntro}) is in contrast to the unweighted case $\alpha =0$: it is shown in Theorem \ref{6:RightShiftCounter} that there exist scalar valued observation operators for which (\ref{CtsRes}) $\not \Rightarrow$ (\ref{CtsAdmis}). To help prove Theorem \ref{6:RightShiftCounter} it will be shown in \S 2 that for $\alpha \in (0,1)$, $\alpha$-admissibility can be linked to the concept of discrete $\alpha$-admissibility studied in \cite{HarperWeiss, Wynn1, Wynn2}. 

If $X$ and $Y$ are Hilbert spaces and $T \in \mathcal{L}(X)$ is a contraction $(\|T\|_{\mathcal{L}(X)} \leq 1)$, an observation operator $D \in \mathcal{L}(X,Y)$ is said to be \textit{discrete $\alpha$-admissible} for $T$ if there exists a constant $M>0$ such that 
\begin{equation} \label{DisAdmis}
\sum_{n=0}^\infty (1+n)^\alpha \|DT^n x\|_Y^2 \leq M^2 \|x\|_X^2, \qquad x \in X.
\end{equation}
It is shown in \cite{Wynn1} that if $\alpha \in (-1,1)$ and (\ref{DisAdmis}) holds then
\begin{equation} \label{DisRes}
\sup_{\omega \in \mathbb{D}} (1-|\omega|^2)^{\frac{1-\alpha}{2}} \|D(I-\bar \omega T)^{-1}\|_{\mathcal{L}(X,Y)} < \infty,
\end{equation}
and it is again natural to attempt to determine the class of operators $(D,T)$ for which the reverse implication (\ref{DisRes}) $\Rightarrow$ (\ref{DisAdmis}) holds. The question of whether this implication holds for certain types of operator is known as the {\em discrete weighted Weiss conjecture}. In the case $\alpha=0$, it is shown in  \cite{HarperWeiss} that (\ref{DisRes}) $\Rightarrow$ (\ref{DisAdmis}) for any scalar valued observation operator $D \in \mathcal{L}(X,\mathbb{C})$. Furthermore, this was shown to be directly equivalent to the fact that (\ref{CtsAdmis}) $\Leftrightarrow$ (\ref{CtsRes}) for generators of contractive $C_0$-semigroups and scalar valued observation operators $C \in \mathcal{L}(D(A),\mathbb{C})$. 

In the case $\alpha \in (0,1)$, it is shown in \cite{Wynn1} that the question of whether (\ref{DisRes}) $\Leftrightarrow$ (\ref{DisAdmis}) for normal operators $T$ is equivalent to the problem of whether 
(\ref{CtsRes})$\Leftrightarrow$ (\ref{CtsAdmis}) for normal $C_0$-semigroup generators $A$. However, since the right-shift semigroup is not a normal operator, this result cannot be applied here. For this reason \S \ref{TRANS} of this paper will show how to relate continuous $\alpha$-admissibility to discrete $\alpha$-admissibility if $T$ is any contraction operator and $A$ generates a contractive $C_0$-semigroup. This result will then be applied in \S \ref{RSS} to translate a discrete time counterexample from \cite{Wynn2} to the right-shift semigroup.

\section{Translating $\alpha$-admissibility} \label{TRANS}
It is shown in \cite{Wynn1} that discrete $\alpha$-admissibility is closely related to the weighted Bergman space $\mathcal{A}_{\alpha-1}^2(\mathbb{D})$, which contains analytic functions $f:\mathbb{D} \rightarrow \mathbb{C}$ for which 
\begin{equation*}
\|f\|_{\mathcal{A}_{\alpha-1}^2(\mathbb{D})}^2 := \int_{\mathbb{D}} |f(z)|^2 (1-|z|^2)^{\alpha-1} dA(z) < \infty,
\end{equation*}
where $dA(z)$ is Lebesgue area measure on $\mathbb{D}$.
 The reproducing kernels $k_\omega^{\alpha-1} \in \mathcal{A}_{\alpha-1}^2(\mathbb{D})$ are given by
\begin{equation} \label{RepSeries}
k_\omega^{\alpha-1}(z)  :=  \frac{1}{(1-\bar \omega z)^{1+\alpha}} = \sum_{n=0}^\infty \alpha^{(n)} \bar \omega^n z^n, \qquad \omega \in \mathbb{D}, z \in \mathbb{D},
\end{equation}
where $\alpha^{(0)}:=1$ and $\alpha^{(n)}:= \frac{1}{n!} \prod_{i=1}^n (i+\alpha), n \geq 1$. Moreover, 
\begin{equation} \label{4:IntIterate}
\alpha \int_0^1 (1-x)^{\alpha-1} x^n dx = \frac{1}{\alpha^{(n)}}, \qquad n \in \mathbb{N}, \alpha >0.
\end{equation}
Continuous $\alpha$-admissibility is related to the weighted Bergman space $\mathcal{A}_{\alpha-1}^2(\mathbb{C}_+)$, which contains those analytic functions $F:\mathbb{C}_+ \rightarrow \mathbb{C}$ for which
\begin{equation*}
\|F\|_{\mathcal{A}_{\alpha-1}^2(\mathbb{C}_+)}^2 := \int_0^\infty \!\!\! \int_{\! -\infty}^\infty x^{\alpha-1} |F(x+iy)|^2 dx dy < \infty.
\end{equation*}
The reproducing kernels for $\mathcal{A}_{\alpha-1}^2(\mathbb{C}_+)$ are given by $K_\lambda^{\alpha-1}(z):=(\bar \lambda +z)^{-(1+\alpha)},  z, \lambda \in \mathbb{C}_+$
and are related to the reproducing kernels for $\mathcal{A}_{\alpha-1}^2(\mathbb{D})$ by the identity
\begin{equation} \label{RepRel}
k_\omega^{\alpha-1}(\mathcal{M}z) = \left( \frac{1+z}{1+\bar \omega} \right)^{1+\alpha} K_{\mathcal{M}\omega}^{\alpha-1} (z), \qquad \omega \in \mathbb{D}, z \in \mathbb{C}_+,
\end{equation}
where $\mathcal{M}z:=(1-z)(1+z)^{-1}$. 
The following identity will also be useful.
\begin{equation} \label{Ident}
(1-|\omega|^2) = \text{Re}\left( \mathcal{M}\omega \right) |1+\omega|^2, \qquad \omega \in \mathbb{D}.
\end{equation}
The following proposition appears in \cite{Wynn1} for scalar valued observation operators, but it is not difficult to extend it to apply to observation operators of the form $C \in \mathcal{L}(D(A),Y)$. 

\begin{LinkIff} \label{LinkIff}
Let $\alpha \in (0,1)$ and let $X$ and $Y$ be Hilbert spaces. Suppose that $A$ generates a contractive $C_0$-semigroup on $X$ and $C \in \mathcal{L}(D(A),Y)$. Define $T:=(I+A)(I-A)^{-1} \in \mathcal{L}(X)$ and $D:=  C(I-A)^{-(1+\alpha)} \in \mathcal{L}(X,Y)$. Then $C$ is $\alpha$-admissible for $A$ if and only if $D$ is discrete $\alpha$-admissible for $T$. 
\end{LinkIff}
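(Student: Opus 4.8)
The plan is to show that each of the two admissibility conditions is equivalent to one and the same square-function estimate over the half-plane $\mathbb{C}_+$, and then to pass between the disc and half-plane pictures using the Cayley transform $\mathcal{M}$ together with the kernel and weight identities (\ref{RepSeries}), (\ref{4:IntIterate}), (\ref{RepRel}) and (\ref{Ident}). Since the scalar case is already in \cite{Wynn1}, the only genuinely new point is that every step survives replacing $|\cdot|$ by $\|\cdot\|_Y$ (Parseval and the relevant Paley--Wiener isometry hold verbatim for $Y$-valued functions) and that the unbounded operator $C$ can be transported through the integrals involved; I would therefore run the scalar computation for general $Y$ while tracking these two issues. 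Throughout, write $(T(t))_{t\ge 0}$ for the semigroup generated by $A$.

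First I treat the discrete side. Expanding $D(I-\bar\omega T)^{-(1+\alpha)}x=\sum_{n\ge 0}\alpha^{(n)}\bar\omega^n DT^n x$ via (\ref{RepSeries}), integrating $\|\cdot\|_Y^2$ over $\mathbb{D}$ against $(1-|\omega|^2)^{\alpha-1}$, and using angular orthogonality followed by (\ref{4:IntIterate}) yields
\begin{equation*}
\int_{\mathbb{D}}\big\|D(I-\bar\omega T)^{-(1+\alpha)}x\big\|_Y^2(1-|\omega|^2)^{\alpha-1}\,dA(\omega)=\frac{\pi}{\alpha}\sum_{n\ge 0}\alpha^{(n)}\|DT^n x\|_Y^2 .
\end{equation*}
Since $\alpha^{(n)}\asymp(1+n)^\alpha$, the right-hand side is comparable to the defining sum in (\ref{DisAdmis}), so discrete $\alpha$-admissibility of $D$ is equivalent to a uniform bound on the left-hand integral. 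Substituting $D=C(I-A)^{-(1+\alpha)}$ and $T=(I+A)(I-A)^{-1}$, and noting that every operator in sight is a function of $A$, the holomorphic functional calculus gives $(I-A)^{-(1+\alpha)}(I-\bar\omega T)^{-(1+\alpha)}=\big[(1-\bar\omega)I-(1+\bar\omega)A\big]^{-(1+\alpha)}=(1+\bar\omega)^{-(1+\alpha)}R(\overline{\mathcal{M}\omega},A)^{1+\alpha}$, the branches being consistent because $\overline{\mathcal{M}\omega}\,I-A$ and $1+\bar\omega$ both have spectrum in the open right half-plane. The integrand thus becomes $|1+\omega|^{-2(1+\alpha)}\|CR(\overline{\mathcal{M}\omega},A)^{1+\alpha}x\|_Y^2$, and the change of variables $\lambda=\overline{\mathcal{M}\omega}$ (a bijection $\mathbb{D}\to\mathbb{C}_+$ with $dA(\omega)=\tfrac14|1+\omega|^4\,dA(\lambda)$), combined with (\ref{Ident}), collapses all powers of $|1+\omega|$ and converts the estimate into
\begin{equation*}
\int_{\mathbb{C}_+}(\mathrm{Re}\,\lambda)^{\alpha-1}\big\|CR(\lambda,A)^{1+\alpha}x\big\|_Y^2\,dA(\lambda)\le (M')^{2}\|x\|_X^2 .
\end{equation*}

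Next I treat the continuous side. For $\mathrm{Re}\,\lambda>0$ one has $R(\lambda,A)^{1+\alpha}=\frac{1}{\Gamma(1+\alpha)}\int_0^\infty t^\alpha e^{-\lambda t}T(t)\,dt$, and pulling $C$ inside (justified below) identifies $CR(\lambda,A)^{1+\alpha}x$ with $\frac{1}{\Gamma(1+\alpha)}$ times the Laplace transform of $t\mapsto t^\alpha CT(t)x$. The Paley--Wiener theorem realises the Laplace transform as an isometry of $L^2(\mathbb{R}_+,\Gamma(1+\alpha)t^{-\alpha}\,dt;Y)$ onto $\mathcal{A}^2_{\alpha-1}(\mathbb{C}_+)$, a fact one can confirm directly from the kernel $K_\lambda^{\alpha-1}$, so the displayed $\mathbb{C}_+$-integral equals a fixed positive multiple of $\int_0^\infty t^\alpha\|CT(t)x\|_Y^2\,dt$. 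Reading the chain of equalities backwards then shows the half-plane estimate is equivalent to (\ref{CtsAdmis}); density of $D(A)$ in $X$ reconciles the two natural domains, and because each step is an equality up to a constant, every implication is reversible.

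The main obstacle is the rigorous transfer of the unbounded $C$ across these representations. One must show that for $x\in D(A)$ the $X$-valued integral $\int_0^\infty t^\alpha e^{-\lambda t}T(t)x\,dt$ lies in $D(A)$ and that $C$ commutes with it; I would obtain this from the closedness of $C$ viewed as a map from $X$ (with the graph norm of $A$) into $Y$, together with convergence of the integral in that graph norm, and I would argue the analogous point for the functional-calculus identity defining $R(\overline{\mathcal{M}\omega},A)^{1+\alpha}$. The remaining ingredients---the Parseval/Tonelli computation over $\mathbb{D}$, the asymptotics $\alpha^{(n)}\asymp(1+n)^\alpha$, and the Paley--Wiener isometry constant---are routine bookkeeping.
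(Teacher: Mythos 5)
Your proof is correct and follows essentially the same route as the paper's: the paper offers no proof of Proposition \ref{LinkIff}, deferring to \cite{Wynn1}, and your argument---identifying both admissibility conditions with one and the same weighted Bergman square function over $\mathbb{C}_+$ via (\ref{RepSeries}), (\ref{4:IntIterate}), the operator form of (\ref{RepRel})/(\ref{Ident}), and Plancherel for the Laplace transform---is exactly the computation that the paper's \S\ref{TRANS} machinery is set up for, with the vector-valued Parseval step and the closed-graph transfer of the unbounded $C$ handled correctly. The only quibble, harmless since you invoke only proportionality, is the isometry constant: it is $2^{1-\alpha}\pi\Gamma(\alpha)$, not $\Gamma(1+\alpha)$.
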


\begin{DisToCts} \label{DisToCts}
Let $\alpha \in (0,1)$ and let $X$ and $Y$ be Hilbert spaces. Suppose that $A$ generates a contractive $C_0$-semigroup on $X$ and $C \in \mathcal{L}(D(A),Y)$. Then if $(\ref{CtsRes})$ holds it follows that  $D:=C(I-A)^{-(1+\alpha)} \in \mathcal{L}(X,Y)$ and $T:=(I+A)(I-A)^{-1} \in \mathcal{L}(X)$ satisfy $(\ref{DisRes})$.
\end{DisToCts}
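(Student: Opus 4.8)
The plan is to reduce the discrete resolvent bound (\ref{DisRes}) to the continuous one (\ref{CtsRes}) through the Cayley transform that already links $A$ to $T$ and $C$ to $D$, and then to absorb the fractional power $(I-A)^{-\alpha}$ hidden in $D$ by a subordination estimate. First I would compute $(I-\bar\omega T)^{-1}$ explicitly. Writing $T=(I+A)(I-A)^{-1}$ gives
\begin{equation*}
I-\bar\omega T=\bigl[(1-\bar\omega)I-(1+\bar\omega)A\bigr](I-A)^{-1}=(1+\bar\omega)\,(\bar\lambda I-A)(I-A)^{-1},
\end{equation*}
where $\bar\lambda=(1-\bar\omega)/(1+\bar\omega)=\overline{\mathcal{M}\omega}$, so that $\lambda:=\mathcal{M}\omega\in\mathbb{C}_+$. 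Inverting and recalling $D=C(I-A)^{-(1+\alpha)}$ yields the key identity
\begin{equation*}
D(I-\bar\omega T)^{-1}=(1+\bar\omega)^{-1}C(I-A)^{-\alpha}R(\bar\lambda,A),\qquad \lambda=\mathcal{M}\omega.
\end{equation*}
That the right-hand side is a bounded operator follows from $C(I-A)^{-\alpha}R(\bar\lambda,A)=D\bigl[I-(\bar\lambda-1)R(\bar\lambda,A)\bigr]$, which is consistent with $D\in\mathcal{L}(X,Y)$.

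Next I would translate the weight. Since $\mathcal{M}$ is an involution we have $\omega=\mathcal{M}\lambda$ and $1+\omega=2(1+\lambda)^{-1}$, while (\ref{Ident}) gives $1-|\omega|^2=(\mathrm{Re}\,\lambda)\,|1+\omega|^2$. Substituting these into the left-hand side of (\ref{DisRes}) and simplifying the scalar factors reduces the whole proposition to the single continuous estimate
\begin{equation*}
\sup_{\lambda\in\mathbb{C}_+}(\mathrm{Re}\,\lambda)^{\frac{1-\alpha}{2}}\,|1+\lambda|^{\alpha}\,\bigl\|C(I-A)^{-\alpha}R(\bar\lambda,A)\bigr\|_{\mathcal{L}(X,Y)}<\infty.
\end{equation*}
The crucial point is that the factor $|1+\lambda|^{\alpha}$ must be beaten, so the bound on $\|C(I-A)^{-\alpha}R(\bar\lambda,A)\|$ has to decay like $(\mathrm{Re}\,\lambda)^{-(1-\alpha)/2}|1+\lambda|^{-\alpha}$. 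Since the hypothesis (\ref{CtsRes}) only controls single resolvents and sees no decay in $|\mathrm{Im}\,\lambda|$, the extra decay must be produced by the interaction of $(I-A)^{-\alpha}$ with $R(\bar\lambda,A)$.

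To produce it I would use the Balakrishnan subordination formula
\begin{equation*}
(I-A)^{-\alpha}=\frac{\sin\pi\alpha}{\pi}\int_0^\infty s^{-\alpha}\,R(1+s,A)\,ds,
\end{equation*}
valid because $A$ generates a contraction semigroup and so $\sigma(I-A)\subseteq\{\mathrm{Re}\,z\geq1\}$. Multiplying by $R(\bar\lambda,A)$ and using the resolvent identity gives
\begin{equation*}
C(I-A)^{-\alpha}R(\bar\lambda,A)=\frac{\sin\pi\alpha}{\pi}\int_0^\infty\frac{s^{-\alpha}}{\bar\lambda-1-s}\bigl[CR(1+s,A)-CR(\bar\lambda,A)\bigr]\,ds,
\end{equation*}
the interchange with $C$ being justified, via closedness of $C$, by the bound $\|CR(1+s,A)R(\bar\lambda,A)\|\leq M(\mathrm{Re}\,\lambda)^{-(1-\alpha)/2}(1+s)^{-1}$ (from (\ref{CtsRes}) and $\|R(\mu,A)\|\leq(\mathrm{Re}\,\mu)^{-1}$) together with $\int_0^\infty s^{-\alpha}(1+s)^{-1}\,ds=\pi/\sin\pi\alpha$. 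I would then estimate the integral by splitting the range of $s$ at the scale $|1+\lambda|$: for large $s$ the product bound $\|CR(\bar\lambda,A)\|\,\|R(1+s,A)\|\leq M(\mathrm{Re}\,\lambda)^{-(1-\alpha)/2}(1+s)^{-1}$ is used, while for small $s$ the difference form, bounded by $M\bigl[(1+s)^{-(1-\alpha)/2}+(\mathrm{Re}\,\lambda)^{-(1-\alpha)/2}\bigr]\,|\bar\lambda-1-s|^{-1}$, supplies the missing factor $|1+\lambda|^{-\alpha}$ through the denominator.

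The hard part will be the near-diagonal region $s\approx\mathrm{Re}\,\lambda-1$, which only causes trouble when $\lambda$ lies close to the real axis (so that $|\bar\lambda-1-s|$ can be small): there the crude triangle-inequality bound on the difference produces a spurious logarithmic divergence, and one must instead exploit the cancellation $CR(1+s,A)-CR(\bar\lambda,A)\to0$ as $1+s\to\bar\lambda$. I would control this using the analyticity of $\mu\mapsto CR(\mu,A)$ on $\mathbb{C}_+$: a Cauchy estimate on a circle of radius comparable to $\mathrm{Re}\,\mu$ gives $\|CR(\mu,A)^2\|\leq M'(\mathrm{Re}\,\mu)^{-(3-\alpha)/2}$, whence the difference quotient $\bigl\|[CR(1+s,A)-CR(\bar\lambda,A)]/(\bar\lambda-1-s)\bigr\|$ stays bounded by the supremum of $\|CR(\mu,A)^2\|$ along the short segment joining $1+s$ to $\bar\lambda$. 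Assembling the contributions of the three regions and multiplying through by $(\mathrm{Re}\,\lambda)^{(1-\alpha)/2}|1+\lambda|^{\alpha}$ then yields a bound uniform in $\lambda\in\mathbb{C}_+$, which is exactly the reduced estimate above and hence (\ref{DisRes}).
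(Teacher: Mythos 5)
Your proof is correct, but it takes a genuinely different route from the paper's. The paper never isolates the mixed operator $C(I-A)^{-\alpha}R(\bar\lambda,A)$: instead it boosts (\ref{CtsRes}) to $\|CR(\lambda,A)^{1+\alpha}\|_{\mathcal{L}(X,Y)}\leq k(\mathrm{Re}\,\lambda)^{-\frac{1+\alpha}{2}}$ using contractivity ($\|R(\lambda,A)^{\alpha}\|_{\mathcal{L}(X)}\leq(\mathrm{Re}\,\lambda)^{-\alpha}$), transfers this bound to the disc through the kernel identities (\ref{RepRel}) and (\ref{Ident}) to get $\|D(I-\bar\omega T)^{-(1+\alpha)}\|_{\mathcal{L}(X,Y)}\leq k(1-|\omega|^2)^{-\frac{1+\alpha}{2}}$, and then descends to the first power by the disc subordination $\frac{1}{\alpha}(I-\bar\omega T)^{-1}=\int_0^1(1-x)^{\alpha-1}(I-x\bar\omega T)^{-(1+\alpha)}dx$ coming from (\ref{4:IntIterate}); because the subordination parameter there moves radially (from $0$ to $\bar\omega$), the $(1+\alpha)$-power bound applies pointwise under the integral and the proof closes with the single substitution $x=1-(1-|\omega|)s$, with no cancellation needed anywhere. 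You instead perform the Cayley transfer at power one, which leaves $(I-A)^{-\alpha}$ unabsorbed; subordinating it along the positive real axis while $\bar\lambda$ stays fixed creates the near-singularity at $1+s\approx\bar\lambda$, and that is what forces your three-region splitting and the mean-value/Cauchy estimate $\|CR(\mu,A)^2\|\lesssim(\mathrm{Re}\,\mu)^{-\frac{3-\alpha}{2}}$ (itself the two-resolvent analogue of the paper's first step, obtained from analyticity rather than contractivity). I checked your reductions --- the identity $D(I-\bar\omega T)^{-1}=(1+\bar\omega)^{-1}C(I-A)^{-\alpha}R(\bar\lambda,A)$, the weight translation to $\sup_\lambda(\mathrm{Re}\,\lambda)^{\frac{1-\alpha}{2}}|1+\lambda|^{\alpha}\|C(I-A)^{-\alpha}R(\bar\lambda,A)\|<\infty$, and the regional estimates including the delicate regime $|\mathrm{Im}\,\lambda|\lesssim\mathrm{Re}\,\lambda$ --- and they do close, the point being that the segment joining $1+s$ to $\bar\lambda$ stays in $\{\mathrm{Re}\,\mu\geq\min(1+s,\mathrm{Re}\,\lambda)\}$, so the Cauchy bound holds uniformly along it. What your approach buys is independence from the Bergman reproducing-kernel relation and the power-series/Fubini computation, working purely with resolvents on the half-plane; its cost is a multi-case integral estimate where the paper has a one-line substitution. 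One wording fix: the interchange of $C$ with the Balakrishnan integral should be justified not by ``closedness of $C$'' (an operator in $\mathcal{L}(D(A),Y)$ need not be closed or closable as an operator from $X$ to $Y$) but by observing that the integral converges as a Bochner integral in the graph norm of $D(A)$, on which $C$ is bounded.
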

\begin{proof}
If (\ref{CtsRes}) holds there exists a constant $k>0$ such that for any $\lambda \in \mathbb{C}_+$,
\begin{align}
\|CR(\lambda,A)^{1+\alpha}\|_{\mathcal{L}(X,Y)} &\leq \|CR(\lambda,A)\|_{\mathcal{L}(X,Y)} \|R(\lambda,A)^{\alpha}\|_{\mathcal{L}(X)} \nonumber \\
					 &\leq  \frac{k}{(\text{Re}\lambda)^{\frac{1+\alpha}{2}}}.\label{1PlusAlphaResolvent} 
\end{align}
It follows from (\ref{RepRel}), (\ref{Ident}) and (\ref{1PlusAlphaResolvent}) that for any $\omega \in \mathbb{D}$,
\begin{align}
\|D(I - \bar \omega T)^{-(1+\alpha)}\|_{\mathcal{L}(X,Y)} & = \frac{ \|CR(\mathcal{M}\bar \omega , A)^{1+\alpha}\|_{\mathcal{L}(X,Y)}}{|1+\bar \omega|^{1+\alpha}} \nonumber \\
& \leq  \frac{k}{(1-|\omega|^2)^\frac{1+\alpha}{2}}. \label{bounds}
\end{align}	
Therefore,
\begin{align*}
\int_0^1 (1-x)^{\alpha-1} (I - x \bar \omega T)^{-(1+\alpha)} dx 
& = \int_0^1 (1-x)^{\alpha-1} \sum_{n=0}^\infty \alpha^{(n)}  x^n \bar \omega^n T^n  dx\\
\text{(Fubini)} & = \sum_{n=0}^\infty \bar \omega^n T^n \alpha^{(n)} \int_0^1 (1-x)^{\alpha-1} x^n dx \\
\text{(by (\ref{4:IntIterate}))} & =  \frac{1}{\alpha} (I-\bar \omega T)^{-1}.
\end{align*}
Hence, for any $\omega \in \mathbb{D}$,
\begin{align*}
 \|D (I- \bar \omega T)^{-1}\|_{\mathcal{L}(X,Y)}
 & =    \alpha \left\| \int_0^1 (1-x)^{\alpha-1} D(I- \bar \omega x T)^{-(1+\alpha)} dx \right\|_{\mathcal{L}(X,Y)} \\													 
 &\leq  \alpha \int_0^1 (1-x)^{\alpha-1} \|D(I-\bar \omega x T)^{-(1+\alpha)}\|_{\mathcal{L}(X,Y)} dx\\						 \text{(by (\ref{bounds}))}
  & \leq  \alpha k \int_0^1 \frac{(1-x)^{\alpha-1}}{(1-|\bar \omega x|^2)^{\frac{1+\alpha}{2}}} dx\\
		& \leq  \alpha k \int_0^1 \frac{(1-x)^{\alpha-1}}{(1-|\bar \omega x|)^{\frac{1+\alpha}{2}}} dx.
\end{align*}
Now, setting $x=1-(1-|\omega|)s$ implies that
\begin{align*}
\|D (I-  \bar \omega T)^{-1}\|_{\mathcal{L}(X,Y)}
& \leq     \alpha k  \int_0^{\frac{1}{1-|\omega|}} \frac{(1-|\omega|)^\alpha s^{\alpha-1}}{((1-|\omega|)(1+|\omega|s))^{\frac{1+\alpha}{2}}} ds \\
																																			& =     \frac{\alpha k}{(1-|\omega|)^{\frac{1-\alpha}{2}}} \int_0^{\frac{1}{1-|\omega|}} \frac{ s^{\alpha-1}}{(1+|\omega|s)^{\frac{1+\alpha}{2}}} ds\\
																																			& \leq  \frac{k_\alpha}{(1-|\omega|^2)^{\frac{1-\alpha}{2}}}.
\end{align*}
\end{proof}

It is now possible to deduce information about the continuous weighted Weiss conjecture from information about the discrete weighted Weiss conjecture. 

\begin{PositiveTranslation} \label{PositiveTranslation}
Let $\alpha \in (0,1)$. Suppose that $A$ is the generator of a contractive $C_0$-semigroup on a Hilbert space $X$ and let $T:=(I+A)(I-A)^{-1} \in \mathcal{L}(X)$. Then if $(\ref{DisRes}) \Rightarrow (\ref{DisAdmis})$ for every observation operator $D \in \mathcal{L}(X,Y)$, it follows that $(\ref{CtsRes}) \Rightarrow (\ref{CtsAdmis})$ for every observation operator $C \in \mathcal{L}(D(A),Y)$. 
\end{PositiveTranslation}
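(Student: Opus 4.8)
The plan is to chain together the two preceding propositions, treating the Cayley-type transform as a dictionary between the continuous and discrete settings, so that the theorem becomes essentially a transitivity argument. First I would fix an arbitrary observation operator $C \in \mathcal{L}(D(A),Y)$ satisfying the continuous resolvent bound (\ref{CtsRes}), and introduce the associated discrete data $D := C(I-A)^{-(1+\alpha)}$ and $T := (I+A)(I-A)^{-1}$. The key structural point is that $T$ depends only on $A$ and not on $C$; it is therefore fixed throughout, which is exactly what allows the standing discrete hypothesis (formulated for this single $T$) to be applied uniformly to whatever $D$ arises from $C$.

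Next I would invoke Proposition \ref{DisToCts}. Since (\ref{CtsRes}) holds, this proposition does two things at once: it guarantees that $D \in \mathcal{L}(X,Y)$, so that $D$ is a legitimate bounded observation operator to which the discrete hypothesis may be applied, and it yields that the pair $(D,T)$ satisfies the discrete resolvent bound (\ref{DisRes}). Applying the standing hypothesis — that (\ref{DisRes}) $\Rightarrow$ (\ref{DisAdmis}) for every $D \in \mathcal{L}(X,Y)$ — then forces $D$ to be discrete $\alpha$-admissible for $T$, i.e. (\ref{DisAdmis}) holds for $(D,T)$.

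Finally, Proposition \ref{LinkIff} converts this conclusion back to the continuous setting: since $D = C(I-A)^{-(1+\alpha)}$ is discrete $\alpha$-admissible for $T = (I+A)(I-A)^{-1}$, the equivalence in that proposition gives that $C$ is $\alpha$-admissible for $A$, which is precisely (\ref{CtsAdmis}). As $C$ was an arbitrary operator satisfying (\ref{CtsRes}), this establishes (\ref{CtsRes}) $\Rightarrow$ (\ref{CtsAdmis}) for every $C \in \mathcal{L}(D(A),Y)$.

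I do not anticipate a genuine obstacle: all of the analytic content has already been absorbed into Propositions \ref{LinkIff} and \ref{DisToCts}, so the remaining argument is formal. The only point that requires a moment's attention is checking that the transformed operator $D$ is in fact bounded on all of $X$, since otherwise the discrete hypothesis could not legitimately be invoked; but this boundedness is exactly the assertion $D \in \mathcal{L}(X,Y)$ contained in the conclusion of Proposition \ref{DisToCts}.
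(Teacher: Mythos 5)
Your proposal is correct and follows essentially the same route as the paper: apply Proposition \ref{DisToCts} to pass from (\ref{CtsRes}) to (\ref{DisRes}) for the pair $(D,T)$ with $D=C(I-A)^{-(1+\alpha)}$, invoke the discrete hypothesis to get (\ref{DisAdmis}), and then use Proposition \ref{LinkIff} to return to continuous $\alpha$-admissibility of $C$. Your added remark that $D$ must be verified to lie in $\mathcal{L}(X,Y)$ before the discrete hypothesis can be applied is a fair point of care, and it is indeed covered by the statements of Propositions \ref{LinkIff} and \ref{DisToCts}.
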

\begin{proof}
Suppose that $Y$ is a Hilbert space and $C \in \mathcal{L}(D(A),Y)$. If $(A,C)$ satisfy (\ref{CtsRes}), Proposition \ref{DisToCts} implies that $(D,T)$ satisfy (\ref{DisRes}), where $D:=C(I-A)^{-(1+\alpha)}$. By assumption, it follows that $D$ is discrete $\alpha$-admissible for $T$ and Lemma \ref{LinkIff} implies that $C$ is continuous $\alpha$-admissible for $A$. 
\end{proof}

Given a pair of operators $(T,D)$ related to a discrete time system, the obvious way to link them to a pair of continuous time operators is to assume that $T$ is a cogenerator.

\begin{Definition:Cogenerator}
A contraction operator $T$ on a Banach space $X$ is said to be a cogenerator of a contractive $C_0$-semigroup if there exists a semigroup generator $A$ on $X$ for which 
\begin{equation} \label{4:Cogenerator}
T = (I+A)(I-A)^{-1}.
\end{equation}
\end{Definition:Cogenerator}
Not all contractive operators $T$ are cogenerators of semigroups. Indeed, it is shown in \cite{SzNagy} that a contraction operator $T \in \mathcal{L}(X)$ is a cogenerator of a contractive $C_0$-semigroup if and only if $-1$ is not an eigenvalue of $T$. 

The following results concerning fractional powers of operators will be needed in order to link the continuous and discrete weighted Weiss conjectures. The author would like to thank Bernhard Haak for pointing out the following lemma. 

\begin{HaakLemma}  [{\cite{HaakThesis}, Lemma 1.3.6}] \label{4:HaakLemma}
Suppose that $A$ is the generator of a bounded $C_0$-semigroup on a Banach space $X$. Then for any $\lambda \in \mathbb{C}_+$ and $0 < \alpha < \beta$,
\begin{equation*}
R(\lambda,A)^{\alpha} = B(\alpha,\beta-\alpha)^{-1} \int_0^\infty \mu^{\beta-\alpha-1} R(\lambda+\mu,A)^{\beta} d\mu,
\end{equation*}
where $B(x,y):=\frac{\Gamma(x)\Gamma(y)}{\Gamma(x+y)}$ is the standard beta function.
\end{HaakLemma}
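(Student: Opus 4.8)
The plan is to reduce the operator identity to the scalar Beta--Gamma relation by means of the integral (subordination) representation of the fractional powers of the resolvent. Write $(T(t))_{t \ge 0}$ for the semigroup generated by $A$ and fix $M \ge 1$ with $\|T(t)\|_{\mathcal{L}(X)} \le M$ for all $t \ge 0$. For $\lambda \in \mathbb{C}_+$ the semigroup $t \mapsto e^{-\lambda t} T(t)$, generated by $A - \lambda$, is exponentially stable, so that $\lambda - A = -(A-\lambda)$ has well-defined fractional powers and the starting point is the representation
\begin{equation*}
R(\lambda, A)^{\gamma} = \frac{1}{\Gamma(\gamma)} \int_0^\infty t^{\gamma - 1} e^{-\lambda t} T(t) \, dt, \qquad \gamma > 0,
\end{equation*}
understood in the strong sense (applied to a vector $x \in X$). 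This is the operator form of $a^{-\gamma} = \Gamma(\gamma)^{-1} \int_0^\infty t^{\gamma - 1} e^{-at}\,dt$, valid for $\text{Re}\, a > 0$; I would quote it from the standard theory of fractional powers of negative generators of bounded semigroups, or verify it directly from the Laplace representation $R(\lambda, A) = \int_0^\infty e^{-\lambda t} T(t) \, dt$ (the case $\gamma = 1$) together with analyticity in $\lambda$.

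Next I would substitute this representation, with exponent $\beta$ and spectral parameter $\lambda + \mu$, into the right-hand side of the claimed identity. Applied to a fixed $x \in X$ this gives
\begin{equation*}
\int_0^\infty \mu^{\beta - \alpha - 1} R(\lambda + \mu, A)^{\beta} x \, d\mu = \frac{1}{\Gamma(\beta)} \int_0^\infty \!\! \int_0^\infty \mu^{\beta - \alpha - 1} s^{\beta - 1} e^{-(\lambda + \mu) s} T(s) x \, ds \, d\mu.
\end{equation*}
Interchanging the order of integration, the inner integral in $\mu$ evaluates to $\int_0^\infty \mu^{\beta - \alpha - 1} e^{-\mu s}\, d\mu = \Gamma(\beta - \alpha)\, s^{-(\beta - \alpha)}$, which reduces the power $s^{\beta - 1}$ to $s^{\alpha - 1}$. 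Recognising $\int_0^\infty s^{\alpha - 1} e^{-\lambda s} T(s) x\, ds = \Gamma(\alpha) R(\lambda, A)^{\alpha} x$ by the representation above, and collecting the constants as $\Gamma(\alpha)\Gamma(\beta - \alpha)/\Gamma(\beta) = B(\alpha, \beta - \alpha)$, yields the assertion after dividing by $B(\alpha, \beta - \alpha)$.

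The one step demanding genuine care is the justification of the interchange of integration for the Bochner integral. I would control the integrand by the scalar majorant
\begin{equation*}
\bigl\| \mu^{\beta - \alpha - 1} s^{\beta - 1} e^{-(\lambda + \mu)s} T(s) x \bigr\|_X \le M \|x\|_X\, \mu^{\beta - \alpha - 1} s^{\beta - 1} e^{-(\text{Re}\,\lambda + \mu) s},
\end{equation*}
whose double integral equals $M \|x\|_X \Gamma(\beta - \alpha) \int_0^\infty s^{\alpha - 1} e^{-(\text{Re}\,\lambda) s}\, ds$ by the same computation. This is finite precisely because $\alpha > 0$, $\beta - \alpha > 0$ and $\text{Re}\,\lambda > 0$ guarantee integrability at both endpoints, so Fubini--Tonelli applies and validates the interchange. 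Apart from this convergence bookkeeping the argument is simply the scalar Beta--Gamma identity transported to the operator setting, so I expect no further obstacle.
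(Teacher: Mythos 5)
The paper gives no proof of this lemma: it is imported directly from Haak's thesis (\cite{HaakThesis}, Lemma 1.3.6), so there is no internal argument to compare yours against; your proposal has to be judged on its own, and on its own it is correct and complete. The subordination formula $R(\lambda,A)^{\gamma}x = \Gamma(\gamma)^{-1}\int_0^\infty t^{\gamma-1}e^{-\lambda t}T(t)x\,dt$ is valid here because the semigroup is bounded and $\mathrm{Re}\,\lambda>0$ makes $t\mapsto e^{-\lambda t}T(t)$ exponentially stable, and it is the standard representation of the fractional powers $(\lambda-A)^{-\gamma}$; once it is in hand, your Tonelli majorant $M\|x\|_X\,\mu^{\beta-\alpha-1}s^{\beta-1}e^{-(\mathrm{Re}\lambda+\mu)s}$ integrates to $M\|x\|_X\,\Gamma(\beta-\alpha)\Gamma(\alpha)(\mathrm{Re}\lambda)^{-\alpha}<\infty$, the interchange of integrals is licit, and the inner $\mu$-integral produces exactly $\Gamma(\beta-\alpha)s^{-(\beta-\alpha)}$, hence the Beta constant. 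Two small caveats, neither a genuine gap. First, in some treatments the formula you start from \emph{is} the definition of $R(\lambda,A)^{\gamma}$, in which case your argument is a pure computation; if instead fractional powers are taken as defined via Balakrishnan's formula or the sectorial functional calculus, you must cite the standard consistency result identifying the two — and note that your parenthetical fallback of deriving the representation ``from the case $\gamma=1$ together with analyticity in $\lambda$'' does not work as stated, since differentiating in $\lambda$ only produces the integer powers and says nothing about non-integer exponents. Second, Fubini--Tonelli in the Bochner setting also needs joint strong measurability of $(s,\mu)\mapsto \mu^{\beta-\alpha-1}s^{\beta-1}e^{-(\lambda+\mu)s}T(s)x$, which follows at once from strong continuity of the semigroup; it is worth one line.
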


\begin{StrongOperatorConvergence} \label{4:StrongOperatorConvergence}
Let $\alpha \in (0,1)$ and suppose that $A$ is the generator of a contractive $C_0$-semigroup on a Banach space $X$. Then for any constant $a \geq 0$ and any $x \in X$,
\begin{equation*}
\|\lambda^{1+\alpha} R(a+\lambda,A)^{1+\alpha}x - x\|_X \rightarrow 0, \qquad \lambda \rightarrow \infty.
\end{equation*}
\end{StrongOperatorConvergence}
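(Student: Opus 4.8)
The plan is to represent the fractional power of the resolvent as a Laplace-type integral of the semigroup and then rescale. Writing $(T(t))_{t\ge0}$ for the contractive $C_0$-semigroup generated by $A$, I would first recall the subordination (moment) formula
\[
R(a+\lambda,A)^{1+\alpha} = \frac{1}{\Gamma(1+\alpha)}\int_0^\infty t^{\alpha}e^{-(a+\lambda)t}T(t)\,dt,
\]
valid as a Bochner integral for $\text{Re}(a+\lambda)>0$ because $\|T(t)\|\le1$ makes the integrand norm-integrable. This is consistent with the integer case $R(\mu,A)=\int_0^\infty e^{-\mu t}T(t)\,dt$, and by $\|T(t)\|\le1$ it also yields the norm bound $\|R(\mu,A)^{1+\alpha}\|\le(\text{Re}\,\mu)^{-(1+\alpha)}$ (and likewise $\|R(\mu,A)^{\alpha}\|\le(\text{Re}\,\mu)^{-\alpha}$, as used in the proof of Proposition \ref{DisToCts}).

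Multiplying by $\lambda^{1+\alpha}$ and substituting $s=\lambda t$, I would obtain, for $\lambda>0$ and $x\in X$,
\[
\lambda^{1+\alpha}R(a+\lambda,A)^{1+\alpha}x = \frac{1}{\Gamma(1+\alpha)}\int_0^\infty s^{\alpha}e^{-(1+a/\lambda)s}T(s/\lambda)x\,ds,
\]
the powers of $\lambda$ cancelling exactly. As $\lambda\to\infty$ the integrand converges strongly and pointwise in $s$ to $s^{\alpha}e^{-s}x$, using the strong continuity $T(s/\lambda)x\to x$ at the origin together with $e^{-(1+a/\lambda)s}\to e^{-s}$. Since $a\ge0$ and $\|T(s/\lambda)\|\le1$, the integrand is dominated in norm by the $\lambda$-independent integrable majorant $s^{\alpha}e^{-s}\|x\|$, so the dominated convergence theorem for Bochner integrals gives
\[
\lambda^{1+\alpha}R(a+\lambda,A)^{1+\alpha}x \longrightarrow \frac{1}{\Gamma(1+\alpha)}\int_0^\infty s^{\alpha}e^{-s}\,ds\,x = x,
\]
the last integral being $\Gamma(1+\alpha)$; this is the claim.

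The two steps requiring care are the moment representation and the passage to the limit under the integral, and both hinge on contractivity. The bound $\|T(t)\|\le1$ is precisely what guarantees that the shifted, rescaled integrands admit a single integrable majorant independent of $\lambda$, making the interchange of limit and Bochner integral legitimate; the hypothesis $a\ge0$ enters only through $e^{-(1+a/\lambda)s}\le e^{-s}$, ensuring the same majorant covers every admissible $a$. I expect the main obstacle to be one of rigour rather than of idea, namely verifying that the stated Laplace representation genuinely coincides with the fractional power $R(a+\lambda,A)^{1+\alpha}$. This can be checked directly from the semigroup law $T(t)T(s)=T(t+s)$ and the Beta--Gamma identity $\int_0^u t^{\beta-1}(u-t)^{\gamma-1}\,dt=B(\beta,\gamma)u^{\beta+\gamma-1}$, which show that the operators $S_\beta:=\frac{1}{\Gamma(\beta)}\int_0^\infty t^{\beta-1}e^{-(a+\lambda)t}T(t)\,dt$ satisfy $S_\beta S_\gamma=S_{\beta+\gamma}$ and reduce to $R(a+\lambda,A)$ at $\beta=1$; alternatively one may simply invoke the standard subordination formula for fractional powers of the sectorial operator $-A$.
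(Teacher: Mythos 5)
Your proof is correct, but it takes a genuinely different route from the paper's. The paper never invokes the semigroup itself: it works entirely at the resolvent level, first deriving the quantitative estimate $\|\lambda^2 R(a+\lambda,A)^2x - x\|_X \leq c\|x\|_{D(A)}/\lambda$ for $x \in D(A)$ from the classical bound $\|\lambda R(\lambda,A)x-x\|_X \leq \|Ax\|_X/\lambda$, then transferring it to the fractional power via Lemma \ref{4:HaakLemma}, which writes $R(a+\lambda,A)^{1+\alpha}$ as a weighted integral of the squared resolvents $R(a+\lambda+\mu,A)^2$, and finally passing from $D(A)$ to all of $X$ by density together with uniform boundedness of $\{\lambda^{1+\alpha}R(\lambda,A)^{1+\alpha} : \lambda>0\}$. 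You instead represent $R(a+\lambda,A)^{1+\alpha}$ by the Laplace-type subordination formula, rescale by $s=\lambda t$ so that the powers of $\lambda$ cancel, and conclude by dominated convergence using strong continuity of $T(\cdot)$ at the origin. Your approach buys a shorter argument that applies directly to every $x \in X$, with no density step and no approximation of the fractional power; the paper's approach buys an explicit rate of convergence ($O(1/\lambda)$ on $D(A)$) and avoids the one point in your argument that genuinely requires care, namely identifying the subordination integral with the fractional power $R(a+\lambda,A)^{1+\alpha}$. You correctly flag this and your Beta--Gamma semigroup-property computation settles it; alternatively, one can check consistency with Lemma \ref{4:HaakLemma} directly, since substituting the Laplace representation of $R(\lambda+\mu,A)^\beta$ into that lemma and applying Fubini returns the Laplace representation of $R(\lambda,A)^\alpha$. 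Note also that both arguments use contractivity only through boundedness of the semigroup, so both extend verbatim to bounded $C_0$-semigroups with constants depending on $\sup_t\|T(t)\|$.
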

\begin{proof}
For $x \in D(A)$, it is shown in (\cite{Engel}, p.73) that
\begin{equation} \label{LResolvent}
\|\lambda R(\lambda,A)x-x\|_X \leq \frac{ \|Ax\|_X}{\lambda}, \qquad \lambda >0.
\end{equation}
Fix a constant $a \geq 0$. Then $R(\lambda+a,A)x = (\lambda+a) R(\lambda+a,A)^2x - R(\lambda+a,A)^2 Ax$ and it follows from (\ref{LResolvent}) that
\begin{align}
\|\lambda^2 R(a+\lambda,A)^2 x - x \|_X & \leq   \|a \lambda R(a+\lambda,A)^2 x\|_X \nonumber \\
& \qquad \qquad + \|\lambda R(a+\lambda,A)x - x \|_X \nonumber  \\
& \qquad \qquad \qquad \qquad + \|\lambda R(a+\lambda,A)^2 Ax \|_X \nonumber \\
& \leq  \frac{a \lambda \|x\|_X}{(a+\lambda)^2} + \frac{\|Ax\|_X}{(a+\lambda)} + \frac{\lambda \|Ax\|_X}{(a+\lambda)^2}\nonumber \\
& \leq  \frac{c \|x\|_{D(A)}}{\lambda}, \qquad \lambda >0. \label{4:ResolventSquare}
\end{align}
An application of Lemma \ref{4:HaakLemma} gives
\begin{align*}
\|\lambda^{1+\alpha} R(a+\lambda,A)^{1+\alpha}x -x \|_X 
 &=     \left\| c_\alpha \lambda^{1+\alpha} \int_0^\infty \mu^{-\alpha} R(a+\lambda+\mu,A)^2 x d\mu - x \right\|_X,
\end{align*}
where
\begin{equation*}
c_\alpha^{-1} = B(1+\alpha,1-\alpha) = \lambda^{1+\alpha} \int_0^\infty \frac{\mu^{-\alpha}}{(\lambda+\mu)^2} d\mu.
\end{equation*}
Hence,
\begin{align*}
\|\lambda^{1+\alpha} R(a+\lambda,&A)^{1+\alpha}x -x \|_X	\\
& =  c_\alpha \lambda^{1+\alpha} \left\| \int_0^\infty \mu^{-\alpha} R(a+\lambda+\mu,A)^2x -\frac{ \mu^{-\alpha} x}{(\lambda+\mu)^2} d\mu \right\|_X\\
																 & \leq  c_\alpha \lambda^{1+\alpha} \int_0^\infty \frac{\mu^{-\alpha} \|(\lambda+\mu)^2 R(a+\lambda+\mu,A)^2 x - x\|_X }{(\lambda+\mu)^2}  d\mu
\end{align*}
and by (\ref{4:ResolventSquare}),
\begin{align*} 
\|\lambda^{1+\alpha} R(a+\lambda,A)^{1+\alpha}x -x \|_X	 
& \leq  c_\alpha' \lambda^{1+\alpha} \|x\|_{D(A)} \int_0^\infty \frac{\mu^{-\alpha}}{(\lambda+\mu)^3} d\mu\\
																 & =     \frac{\tilde c_\alpha \|x\|_{D(A)}}{\lambda}, \qquad \lambda >0.
\end{align*}
Since the set $\{ \lambda^{1+\alpha}R(\lambda,A)^{1+\alpha} : \lambda >0\} \subset \mathcal{L}(X)$ is bounded and $D(A)$ is dense in $X$, it follows from (\cite{Engel}, Proposition A.3) that 
\begin{equation*}
\lambda^{1+\alpha} R(a+\lambda,A)^{1+\alpha}x \rightarrow x,\qquad  \lambda \rightarrow \infty,
\end{equation*}
for any $x \in X$.
\end{proof}

\begin{StrongConvergenceContraction} \label{4:StrongConvergenceContraction}
Let $\alpha \in (0,1)$. Suppose that $T \in \mathcal{L}(X)$ is a contractive operator on a Hilbert space $X$, that is the cogenerator of a contractive $C_0$-semigroup. Then for any $x \in X$,
\begin{equation*}
\|(I+\epsilon+T)^{-(1+\alpha)}(I+T)^{1+\alpha}x - x\|_X \rightarrow 0, \qquad \epsilon \rightarrow 0^+.
\end{equation*}
\end{StrongConvergenceContraction}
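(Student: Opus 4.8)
The plan is to reduce the statement to Lemma \ref{4:StrongOperatorConvergence} by means of an explicit operator identity. Since $T$ is the cogenerator of a contractive $C_0$-semigroup, there is a generator $A$ of such a semigroup with $T = (I+A)(I-A)^{-1}$, and $1 \in \rho(A)$ so that $(I-A)^{-1} = R(1,A)$ is bounded. A direct computation gives
\[
I + T = (I-A)(I-A)^{-1} + (I+A)(I-A)^{-1} = 2R(1,A),
\]
and, writing $\lambda := 2/\epsilon$,
\[
I + \epsilon + T = \big[(2+\epsilon)I - \epsilon A\big](I-A)^{-1} = \frac{2}{\lambda}\big[(\lambda+1)I - A\big]R(1,A).
\]
Note that $\lambda \to \infty$ precisely as $\epsilon \to 0^+$.

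From these two identities I would first record the bounded-operator relation obtained by inverting the second display and multiplying by the first:
\[
(I+\epsilon+T)^{-1}(I+T) = \frac{\lambda}{2}(I-A)R(1+\lambda,A)\cdot 2R(1,A) = \lambda R(1+\lambda,A),
\]
where I have used that $(I-A)$ and $R(1+\lambda,A)$ commute and $(I-A)R(1,A) = I$. The substantive step is then to lift this to the $(1+\alpha)$-th power. Because $I-A$ and $(\lambda+1)I - A$ are commuting sectorial operators with spectra in the half-plane $\{\mathrm{Re}\,z \ge 1\}$, their fractional powers multiply as expected, so that $(I+\epsilon+T)^{-(1+\alpha)} = (\lambda/2)^{1+\alpha}(I-A)^{1+\alpha}R(1+\lambda,A)^{1+\alpha}$ and $(I+T)^{1+\alpha} = 2^{1+\alpha}(I-A)^{-(1+\alpha)}$. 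Multiplying, the unbounded factor $(I-A)^{1+\alpha}$ cancels against the bounded factor $(I-A)^{-(1+\alpha)}$ (all factors being functions of $A$, hence commuting, with $(I-A)^{-(1+\alpha)}$ mapping $X$ bijectively onto $D((I-A)^{1+\alpha})$), yielding the clean identity
\[
(I+\epsilon+T)^{-(1+\alpha)}(I+T)^{1+\alpha} = \lambda^{1+\alpha} R(1+\lambda,A)^{1+\alpha}.
\]

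With this identity in hand the result is immediate: applying Lemma \ref{4:StrongOperatorConvergence} with the constant $a = 1$ gives $\|\lambda^{1+\alpha}R(1+\lambda,A)^{1+\alpha}x - x\|_X \to 0$ as $\lambda \to \infty$, and since $\lambda = 2/\epsilon \to \infty$ as $\epsilon \to 0^+$ this is exactly the claimed convergence. I expect the main obstacle to be the rigorous justification of the fractional-power factorisation and the cancellation of $(I-A)^{1+\alpha}$ with $(I-A)^{-(1+\alpha)}$: the spectrum of $I+T$ may touch $0$ (this happens precisely when $-1 \in \sigma(T)$, which is permitted for a cogenerator as long as $-1$ is not an eigenvalue), so $(I+T)^{1+\alpha}$ must be interpreted through the fractional-power/functional calculus of the sectorial operator $I-A$ rather than a naive holomorphic functional calculus of $T$, and the manipulation involving the unbounded operator $(I-A)^{1+\alpha}$ must be carried out on the appropriate domains. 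Everything else is routine algebra together with the previous lemma.
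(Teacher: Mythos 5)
Your proposal is correct and is essentially the paper's own proof: the paper likewise writes $T=(I+A)(I-A)^{-1}$, computes (as ``a simple calculation'') the identity $(I+\epsilon+T)^{-(1+\alpha)}(I+T)^{1+\alpha} = \left(2\epsilon^{-1}\right)^{1+\alpha} R\left(1+2\epsilon^{-1},A\right)^{1+\alpha}$, which is exactly your identity with $\lambda=2/\epsilon$, and then invokes Lemma \ref{4:StrongOperatorConvergence} with $a=1$. Your additional remarks on justifying the fractional-power factorisation address a point the paper leaves implicit, but do not change the route.
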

\begin{proof}
Since $T$ is a semigroup cogenerator, there exists a generator $A$ of a $C_0$-semigroup such that $T=(I+A)(I-A)^{-1}$. A simple calculation shows that for any $\epsilon >0$,
\begin{align*}
(I+\epsilon+T)^{-(1+\alpha)} (I+T)^{1+\alpha} & =  2^{1+\alpha} ((2+\epsilon)I - \epsilon A)^{-(1+\alpha)}\\
														 & =  \left( 2\epsilon^{-1} \right) ^{1+\alpha} R\left(1+2\epsilon^{-1},A\right)^{1+\alpha}
\end{align*}
and the result follows from Lemma \ref{4:StrongOperatorConvergence}.
\end{proof}

In order to link the continuous and discrete weighted growth bounds, it is necessary to temporarily introduce two families of observation operators. If $D \in \mathcal{L}(X,Y)$ and $T \in \mathcal{L}(X)$ is a contraction, define for $\epsilon>0$,
\begin{eqnarray*}
D_\epsilon &:=& D(I+\epsilon+T)^{-(1+\alpha)}(I+T)^{1+\alpha} \in \mathcal{L}(X,Y);\\
C_\epsilon &:=& D(I+\epsilon+T)^{-(1+\alpha)} \in \mathcal{L}(X,Y).
\end{eqnarray*}

\begin{ResolventLink} \label{4:ResolventLink}
Let $\alpha \in (0,1)$ and let $X$ and $Y$ be Hilbert spaces. Suppose that $A$ is the generator of a contractive $C_0$-semigroup on $X$, with cogenerator $T=(I+A)(I-A)^{-1}$. Then if $D \in \mathcal{L}(X,Y)$ and $(\ref{DisRes})$ holds,  $C:=D(I+T)^{-(1+\alpha)} \in \mathcal{L}(D(A),Y)$ and 
$(A,C)$ satisfy $(\ref{CtsRes})$. 
\end{ResolventLink}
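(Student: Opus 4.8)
The plan is to mirror Proposition~\ref{DisToCts}, running its chain of estimates in the opposite direction. First I would record the elementary identities on which everything rests. Since $I+T=2(I-A)^{-1}$, one has $(I+T)^{-(1+\alpha)}=2^{-(1+\alpha)}(I-A)^{1+\alpha}$, so that $C=2^{-(1+\alpha)}D(I-A)^{1+\alpha}$ is genuinely unbounded and only densely defined — this is the source of all the difficulty. The Cayley relation $I-\bar\omega T=(1+\bar\omega)(\mathcal M\bar\omega\,I-A)(I-A)^{-1}$ gives, for $\omega\in\mathbb D$,
\[
(I-\bar\omega T)^{-(1+\alpha)}=\frac{1}{(1+\bar\omega)^{1+\alpha}}(I-A)^{1+\alpha}R(\mathcal M\bar\omega,A)^{1+\alpha},
\]
and hence the exact norm identity $\|CR(\mathcal M\bar\omega,A)^{1+\alpha}\|_{\mathcal L(X,Y)}=2^{-(1+\alpha)}|1+\bar\omega|^{1+\alpha}\|D(I-\bar\omega T)^{-(1+\alpha)}\|_{\mathcal L(X,Y)}$, which is exactly the link (via (\ref{RepRel}) and (\ref{Ident})) used in Proposition~\ref{DisToCts}. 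Note that $CR(\lambda,A)^{1+\alpha}$ is a genuinely bounded operator, since $((1-z)/(\lambda-z))^{1+\alpha}$ is bounded on the closed left half-plane.

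Next I would turn the discrete first-power bound (\ref{DisRes}) into a continuous $(1+\alpha)$-power bound. Because $T$ is a contraction and $\|\bar\omega T\|\le|\omega|<1$, the binomial series for $(I-\bar\omega T)^{-\alpha}$ converges in norm and has nonnegative coefficients summing to $(1-|\omega|)^{-\alpha}$, so $\|(I-\bar\omega T)^{-\alpha}\|_{\mathcal L(X)}\le(1-|\omega|)^{-\alpha}$. Factoring $(I-\bar\omega T)^{-(1+\alpha)}=(I-\bar\omega T)^{-1}(I-\bar\omega T)^{-\alpha}$ and inserting (\ref{DisRes}) gives $\|D(I-\bar\omega T)^{-(1+\alpha)}\|\le k'(1-|\omega|^2)^{-(1+\alpha)/2}$, precisely the analogue of the bound (\ref{bounds}). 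Combining this with the exact identity above and with (\ref{Ident}), and writing $\lambda=\mathcal M\bar\omega$, yields $\|CR(\lambda,A)^{1+\alpha}\|_{\mathcal L(X,Y)}\le k''(\mathrm{Re}\,\lambda)^{-(1+\alpha)/2}$ for every $\lambda\in\mathbb C_+$.

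The final quantitative step is to integrate down from the $(1+\alpha)$-th power to the first power. Applying Lemma~\ref{4:HaakLemma} with exponents $1$ and $1+\alpha$ (so that $B(1,\alpha)^{-1}=\alpha$) gives $R(\lambda,A)=\alpha\int_0^\infty\mu^{\alpha-1}R(\lambda+\mu,A)^{1+\alpha}\,d\mu$. I would then define $\Phi_\lambda:=\alpha\int_0^\infty\mu^{\alpha-1}CR(\lambda+\mu,A)^{1+\alpha}\,d\mu$, a Bochner integral in $\mathcal L(X,Y)$ that converges because the integrand is operator-norm continuous and dominated by $k''\mu^{\alpha-1}(\mathrm{Re}\,\lambda+\mu)^{-(1+\alpha)/2}$, which is integrable precisely because $\alpha\in(0,1)$. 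The substitution $\mu=(\mathrm{Re}\,\lambda)\nu$ then produces $\|\Phi_\lambda\|_{\mathcal L(X,Y)}\le k'''(\mathrm{Re}\,\lambda)^{-(1-\alpha)/2}$, which is the growth bound (\ref{CtsRes}).

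The remaining — and main — obstacle is to identify $\Phi_\lambda$ with the bounded extension of $CR(\lambda,A)$, since $C$ is unbounded and cannot naively be moved inside the integral. On the core $D(A^2)$ this is clean: for $x\in D(A^2)$ one has $R(\lambda,A)x\in D((I-A)^{1+\alpha})$, and commuting the closed operator $(I-A)^{1+\alpha}$ (which commutes with every resolvent) through the integral, then applying the Haak formula to the vector $(I-A)^{1+\alpha}x$ and the bounded operator $D$, gives $\Phi_\lambda x=CR(\lambda,A)x$. The robust way to justify this interchange is via the bounded regularisations $C_\epsilon=D(I+\epsilon+T)^{-(1+\alpha)}$: for these the interchange is automatic, and one passes to the limit $\epsilon\to0^+$ using Lemma~\ref{4:StrongConvergenceContraction}, noting that $R(\lambda,A)x$ and each $R(\lambda+\mu,A)^{1+\alpha}x$ lie in the range of $(I+T)^{1+\alpha}$, on which $C_\epsilon\to C$ strongly; the delicate point here is dominating the integrand uniformly in $\epsilon$ so that the limit may be taken under the integral. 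Either way $\Phi_\lambda$ extends $CR(\lambda,A)$, so $CR(\lambda,A)\in\mathcal L(X,Y)$ and satisfies (\ref{CtsRes}); finally, for $x\in D(A)$ the identity $Cx=CR(\lambda,A)(\lambda I-A)x$ gives $\|Cx\|_Y\le\|CR(\lambda,A)\|\,\|x\|_{D(A)}$, so that $C\in\mathcal L(D(A),Y)$ as claimed.
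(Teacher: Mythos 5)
Your proposal is correct, and its analytic core coincides with the paper's: the Cayley-transform identity relating $D(I-\bar\omega T)^{-(1+\alpha)}$ to $CR(\mathcal{M}\bar\omega,A)^{1+\alpha}$ (i.e.\ (\ref{RepRel}) together with (\ref{Ident})), the factorisation $(I-\bar\omega T)^{-(1+\alpha)}=(I-\bar\omega T)^{-1}(I-\bar\omega T)^{-\alpha}$ to pass from (\ref{DisRes}) to a $(1+\alpha)$-power bound, and Lemma \ref{4:HaakLemma} to descend from $R(\lambda,A)^{1+\alpha}$ back to $R(\lambda,A)$. Where you genuinely diverge is in handling the unboundedness of $C$, which you rightly identify as the main obstacle. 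The paper never works with $C$ itself until the very end: it introduces the bounded regularisations $D_\epsilon$ and $C_\epsilon$, proves all of your estimates uniformly in $\epsilon$ (its (\ref{4:1MinusAlpha}), (\ref{1PlusAlpha}), (\ref{EpRes}), (\ref{DepsilonBound})), and then recovers $C$ via the strong convergence $C_\epsilon x\to Cx$ on $D(A)\cap\mathrm{Ran}(R(1,A)^{1+\alpha})$ (Lemma \ref{4:StrongConvergenceContraction}), the uniform bound (\ref{OperatorBound}), and density of that set in $(D(A),\|\cdot\|_{D(A)})$; this limit argument is the sole reason Lemmas \ref{4:StrongOperatorConvergence} and \ref{4:StrongConvergenceContraction} and the uniform boundedness principle appear. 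Your main route dispenses with the regularisation entirely: you exploit that $CR(\lambda,A)^{1+\alpha}=2^{-(1+\alpha)}D(I-A)^{1+\alpha}R(\lambda,A)^{1+\alpha}$ is already bounded, prove the $(1+\alpha)$-bound for it directly, and identify the Bochner integral $\Phi_\lambda$ with $CR(\lambda,A)$ on the core $D(A^2)$ by moving the closed operator $(I-A)^{1+\alpha}$ through the integral (Hille's theorem), the interchange being justified because $(I-A)^{1+\alpha}$ commutes with $R(\lambda+\mu,A)^{1+\alpha}$ on its domain and the resulting integrand decays like $\mu^{\alpha-1}(\mathrm{Re}\,\lambda+\mu)^{-(1+\alpha)}\|(I-A)^{1+\alpha}x\|_X$. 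What your route buys: no $\epsilon$-machinery at all, and a cleaner, sharper estimate $\|(I-\bar\omega T)^{-\alpha}\|_{\mathcal{L}(X)}\le(1-|\omega|)^{-\alpha}$ from nonnegativity of the binomial coefficients, in place of the paper's chain through \cite{Peller} and \cite{Wynn1}. What the paper's route buys: every operator in sight is bounded, so no domain-tracking, no Hille-type interchange and no core argument is needed; the only limit taken is a strong limit of uniformly bounded operators. Two small points you should make explicit in a final write-up: first, the inclusion $D(A^2)\subset D((I-A)^{1+\alpha})$ (valid since $1+\alpha<2$), which is what makes $CR(\lambda,A)x$ meaningful on your core; second, that the bounded extension $x\mapsto\Phi_\lambda(\lambda I-A)x$ of $C$ to $D(A)$ is independent of $\lambda$ --- both candidates agree with $C$ on $D(A^3)$, which is dense in $(D(A),\|\cdot\|_{D(A)})$ --- so that the conclusion $C\in\mathcal{L}(D(A),Y)$ is unambiguous.
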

\begin{proof}
By Lemma \ref{4:StrongConvergenceContraction} and the uniform boundedness theorem, there exists a constant $k>0$ such that 
\begin{equation*}
\|(I+\epsilon+T)^{-(1+\alpha)}(I+T)^{1+\alpha}\|_{\mathcal{L}(X)} \leq k, \qquad \epsilon >0.
\end{equation*}
Then since (\ref{DisRes}) holds,
\begin{equation} \label{4:1MinusAlpha}
\sup_{\epsilon >0, \omega \in \mathbb{D}} (1-|\omega|^2)^{\frac{1-\alpha}{2}} \|D_{\epsilon}(I-\bar \omega T)^{-1}\|_{\mathcal{L}(X,Y)} < \infty. 
\end{equation}
Recalling (\ref{RepSeries}), it follows that for any $\omega \in \mathbb{D}$,
\begin{eqnarray*}
\| (I-\bar \omega T)^{-\alpha} \|_{\mathcal{L}(X)} & = & \left\| I+ \sum_{n=1}^\infty \frac{\alpha (1+\alpha) \cdots (n-1+\alpha)}{n!} \bar \omega^n T^n  \right\|_{\mathcal{L}(X)} \\
\text{(since $T$ is a contraction)} & \leq & 1+ \sum_{n=1}^\infty \frac{\alpha (1+\alpha) \cdots (n-1+\alpha)}{n!} |\omega|^n\\
\text{(see \cite{Peller}, p.733)} & \leq & 1+ \frac{1}{\Gamma(\alpha)} \sum_{n=1}^\infty n^{\alpha-1} |\omega|^n \\
& \leq & k_\alpha \sum_{n=0}^\infty (1+n)^{\alpha-1} |\omega|^{n}\\
\text{(by \cite{Wynn1}, Lemma 2.2)} & \leq & \frac{k_\alpha'}{(1-|\omega|)^{\alpha}}. 
\end{eqnarray*}
By the above inequality and (\ref{4:1MinusAlpha}),
\begin{equation} \label{1PlusAlpha}
\sup_{\epsilon >0, \omega \in \mathbb{D}} (1-|\omega|^2)^{\frac{1+\alpha}{2}} \|D_{\epsilon}(I-\bar \omega T)^{-(1+\alpha)}\|_{\mathcal{L}(X,Y)} < \infty
\end{equation}
and using (\ref{RepRel}) and (\ref{Ident}) in the same way as in the proof of Proposition \ref{DisToCts}, it can be shown that 
\begin{equation} \label{EpRes}
\sup_{\epsilon >0, \lambda \in \mathbb{C}_+} (\text{Re}\lambda)^{\frac{1+\alpha}{2}} \|C_\epsilon R(\lambda,A)^{1+\alpha}\|_{\mathcal{L}(X,Y)}  < \infty. 
\end{equation}
By Lemma \ref{4:HaakLemma}, 
\begin{align}
\|C_\epsilon R(\lambda,A)\|_{\mathcal{L}(X,Y)} & =   \alpha \left\| \int_0^\infty \mu^{\alpha-1} C_\epsilon R(\mu+\lambda,A)^{1+\alpha} d\mu \right\|_{\mathcal{L}(X,Y)} \nonumber \\
														&\leq \alpha \int_0^\infty \mu^{\alpha-1} \|C_\epsilon R(\mu+\lambda,A)^{1+\alpha}\|_{\mathcal{L}(X,Y)} d\mu \nonumber \\
\text{(by (\ref{EpRes}))}		&\leq  k \int_0^\infty \frac{\mu^{\alpha-1}}{\text{Re}(\mu+\lambda)^{\frac{1+\alpha}{2}}} d\mu \nonumber \\
														&=     k \int_0^\infty \frac{\mu^{\alpha-1}}{(\mu + \text{Re}\lambda)^{\frac{1+\alpha}{2}}} d\mu \nonumber \\
														&=     \frac{k}{(\text{Re}\lambda)^{\frac{1-\alpha}{2}}} \int_0^\infty \frac{s^{\alpha-1}}{(1+s)^{\frac{1+\alpha}{2}}} ds, \nonumber \\
														&=\frac{k_\alpha}{(\text{Re}\lambda)^{\frac{1-\alpha}{2}}}, \qquad \lambda \in \mathbb{C}_+, \epsilon >0.\label{DepsilonBound}
\end{align}														
Now let $x \in D(A) \cap \text{Ran}( R(1,A)^{1+\alpha})$. Since $x \in \text{Ran}( R(1,A)^{1+\alpha})$ there exists $y \in X$ such that $x = R(1,A)^{1+\alpha}y$ and hence,
\begin{align}
\|(C-C_\epsilon)x\|_Y  & =  \| D(I+T)^{-(1+\alpha)} R(1,A)^{1+\alpha}y - C_\epsilon x \|_Y \nonumber \\
									& = 2^{-(1+\alpha)} \|Dy -D(I+\epsilon+T)^{-(1+\alpha)}(I+T)^{1+\alpha}y\|_Y \nonumber \\
									&\leq   2^{-(1+\alpha)} \|D\|_{\mathcal{L}(X,Y)} \|y - (I+\epsilon+T)^{-(1+\alpha)}(I+T)^{1+\alpha}y\|_X \nonumber \\
									 & \rightarrow   0, \qquad \epsilon \rightarrow 0^+, \label{OperatorConvergence}
\end{align}
where the last line follows from Lemma \ref{4:StrongConvergenceContraction}.					
Since $x \in D(A)$, there exists $y' \in X$ such that $x = R(1,A)y'$ and hence,
\begin{align}
\|C_\epsilon x\|_Y &= \|C_\epsilon R(1,A)y'\|_Y \nonumber \\
\text{(by (\ref{DepsilonBound}))} & \leq  k \|y'\|_X \nonumber \\
& \leq  k \|x\|_{D(A)}, \qquad \epsilon >0. \label{OperatorBound}
\end{align}
It is easy to show that $D(A) \cap \text{Ran}( R(1,A)^{1+\alpha})$ is dense in the space $(D(A), \|\cdot\|_{D(A)})$ and hence, it follows from (\ref{OperatorConvergence}) and (\ref{OperatorBound}) that $C:=D(I+T)^{-(1+\alpha)} \in \mathcal{L}(D(A),Y)$. That $(A,C)$ satisfy  (\ref{CtsRes})  follows from (\ref{DepsilonBound})  and (\ref{OperatorConvergence}).
\end{proof}

\begin{PositiveTranslation2} \label{4:PositiveTranslation2}
Let $\alpha \in (0,1)$. Suppose that $T \in \mathcal{L}(X)$ is a contractive operator on a Hilbert space $X$, which is the cogenerator of a $C_0$-semigroup $(T(t))_{t \geq 0}$. Let $A$ be the generator of $(T(t))_{t \geq 0}$. Then if $(\ref{CtsRes}) \Rightarrow (\ref{CtsAdmis})$ for each observation operator $C \in \mathcal{L}(D(A),Y)$, it follows that $(\ref{DisRes}) \Rightarrow (\ref{DisAdmis})$ for each observation operator $D \in \mathcal{L}(X,Y)$. 
\end{PositiveTranslation2}
\begin{proof}
Let $Y$ be a Hilbert space and let $D \in \mathcal{L}(X,Y)$. If $(D,T)$ satisfy (\ref{DisRes}), then by Proposition \ref{4:ResolventLink} it follows that $C:=D(I+T)^{-(1+\alpha)} \in \mathcal{L}(D(A),Y)$ and  $(A,C)$ satisfies (\ref{CtsRes}). By assumption, $C$ is continuous $\alpha$-admissible for $A$. Now,
\begin{equation*}
C(I-A)^{-(1+\alpha)} = D(I+T)^{-(1+\alpha)}(I-A)^{-(1+\alpha)} = 2^{-(1+\alpha)} D
\end{equation*}
and by Proposition \ref{LinkIff}, $D$ is discrete $\alpha$-admissible for $T$. 
\end{proof}

\section{The right shift semigroup on $L^2(\mathbb{R}_+)$} \label{RSS}
The right shift semigroup on $L^2(\mathbb{R}_+)$ is given by
\begin{equation} \label{6:RightShiftSemi}
(S(t)f)(\tau):= \left\{ \begin{array}{lcccc} f(\tau-t), & \tau \geq t;\\ 0, & \tau < t; \end{array} \right., \qquad f \in L^2(\mathbb{R}_+).
\end{equation}
In \cite{RightShift} it is shown that the right shift semigroup satisfies the unweighted Weiss conjecture (i.e. (\ref{CtsRes}) $\Leftrightarrow$ (\ref{CtsAdmis}) in the case $\alpha =0$). The calculations in \cite{RightShift} are simplified by considering the $C_0$-semigroup
\begin{equation}
(T(t)x)(z) := e^{-zt} x_0(z), \qquad z \in \mathbb{C}_+, x_0 \in H^2(\mathbb{C}_+),
\end{equation}
which, via the Laplace transform, is unitarily equivalent to the right shift semigroup $(S(t))_{t\geq 0}$. It is easy to see that for any $\alpha \in (-1,1)$, the property of satisfying the weighted Weiss conjecture is preserved under unitary equivalence of semigroups. 

The aim is to use Theorem \ref{4:PositiveTranslation2} to translate the counterexample from \cite{Wynn2} for the unilateral shift on $H^2(\mathbb{D})$ to continuous time operators. To do this, it will be useful to link the right shift semigroup (\ref{6:RightShiftSemi}) with a $C_0$-semigroup on $H^2(\mathbb{D})$. Let $J:H^2(\mathbb{C}_+) \rightarrow H^2(\mathbb{D})$ be defined by
\begin{equation}
(Jf)(z):= \frac{\sqrt{2}}{1+z} f\left( \frac{1-z}{1+z} \right), \qquad z \in \mathbb{C}_+,f \in H^2(\mathbb{C}_+).
\end{equation}
Then $J$ is a surjective isomorphism and it is easy to show that if
\begin{equation} \label{6:Qt}
Q(t)f := e^{-\left(\frac{1-z}{1+z}\right) t}f(z), \qquad z \in \mathbb{D}, f \in H^2(\mathbb{D}), t \geq 0,
\end{equation}
then $T(t)= J^{-1} Q(t) J, t \geq 0$. Therefore, $(Q(t))_{t\geq 0}$ and $(T(t))_{t\geq0}$ are unitarily equivalent semigroups and hence, $(Q(t))_{t \geq 0}$ is unitarily equivalent to the right shift semigroup (\ref{6:RightShiftSemi}) on $L^2(\mathbb{R}_+)$.

\begin{RightShiftCounter} \label{6:RightShiftCounter}
Let $\alpha \in (0,1)$ and let $A$ be the generator of the right shift $C_0$-semigroup on $L^2(\mathbb{R}_+)$ given by $(\ref{6:RightShiftSemi})$. Then there exists $C \in \mathcal{L}(D(A),\mathbb{C})$ satisfying 
\begin{equation*}
\sup_{\lambda \in \mathbb{C}_+} (\mathrm{Re} \lambda )^{\frac{1-\alpha}{2}} \|CR(\lambda,A)\|_{L^2(\mathbb{R})^*} < \infty,
\end{equation*}
but for which $C$ is not continuous $\alpha$-admissible for $A$. 
\end{RightShiftCounter}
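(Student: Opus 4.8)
The plan is to deduce the theorem purely from the machinery of \S\ref{TRANS}, by applying the contrapositive of Theorem \ref{4:PositiveTranslation2} to the discrete counterexample for the unilateral shift furnished by \cite{Wynn2}. Since $(Q(t))_{t\geq 0}$ on $H^2(\mathbb{D})$ is unitarily equivalent to the right shift semigroup, and since the property of satisfying (or failing) the weighted Weiss conjecture is preserved under unitary equivalence, it suffices to exhibit the required observation operator for the generator of $(Q(t))_{t\geq 0}$ and transport the conclusion back to $L^2(\mathbb{R}_+)$ through the isomorphism $J$ and the Laplace transform. Note that the right shift semigroup is isometric, hence contractive, so all the standing hypotheses of \S\ref{TRANS} are in force.

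First I would identify the cogenerator of $(Q(t))_{t\geq 0}$. Its generator $B$ is multiplication by $-\mathcal{M}z = \tfrac{z-1}{z+1}$, and a short computation gives $I+B = $ multiplication by $\tfrac{2z}{z+1}$ and $I-B = $ multiplication by $\tfrac{2}{z+1}$, whence $(I+B)(I-B)^{-1}$ is multiplication by $z$. That is, the cogenerator of $(Q(t))_{t\geq 0}$ is precisely the unilateral shift $S$ on $H^2(\mathbb{D})$. In particular $-1$ is not an eigenvalue of $S$, consistent with $S$ being a semigroup cogenerator in the sense of \cite{SzNagy}, and the pair $(B,S)$ is exactly of the type to which the translation results apply.

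Next I would invoke the discrete counterexample of \cite{Wynn2}: for the fixed $\alpha\in(0,1)$ there is a scalar valued $D\in\mathcal{L}(H^2(\mathbb{D}),\mathbb{C})$ satisfying the discrete resolvent bound $(\ref{DisRes})$ for $S$, but for which $(\ref{DisAdmis})$ fails. Applying Proposition \ref{4:ResolventLink} to the pair $(S,D)$ produces $C:=D(I+S)^{-(1+\alpha)}\in\mathcal{L}(D(B),\mathbb{C})$ satisfying the continuous resolvent bound $(\ref{CtsRes})$ for $B$. Were this $C$ $\alpha$-admissible for $B$, then since $C(I-B)^{-(1+\alpha)}=2^{-(1+\alpha)}D$, Proposition \ref{LinkIff} would force $D$ to be discrete $\alpha$-admissible for $S$, contradicting the choice of $D$. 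Hence $C$ witnesses the failure of the continuous weighted Weiss conjecture for $B$, and by unitary equivalence the same failure holds for the generator $A$ of the right shift semigroup, which is the assertion of the theorem.

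The genuinely new content of this argument is slight: the substantial analytic work already lies in the translation results of \S\ref{TRANS} and in the discrete construction of \cite{Wynn2}. The two points demanding care are the cogenerator computation --- verifying that the operator attached to the right shift semigroup really is the shift $S$, so that the \cite{Wynn2} counterexample is applicable --- and the bookkeeping confirming that both $(\ref{CtsRes})$ and the failure of $(\ref{CtsAdmis})$ survive the passage back to $L^2(\mathbb{R}_+)$ through the chain of unitary equivalences. I expect the transport to be routine and the cogenerator identification to be the crux, since the entire strategy hinges on the cogenerator of the right shift semigroup coinciding with the operator for which a discrete counterexample is already known.
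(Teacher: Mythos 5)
Your proposal is correct and follows essentially the same route as the paper: identify the cogenerator of $(Q(t))_{t\geq 0}$ as the unilateral shift $S$, invoke the discrete counterexample of \cite{Wynn2}, translate it to continuous time, and pass back to $L^2(\mathbb{R}_+)$ by unitary equivalence. The only cosmetic difference is that you inline the proof of Theorem \ref{4:PositiveTranslation2} (via Proposition \ref{4:ResolventLink}, the identity $C(I-B)^{-(1+\alpha)}=2^{-(1+\alpha)}D$, and Proposition \ref{LinkIff}), whereas the paper simply cites that theorem in its contrapositive form.
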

\begin{proof}
Let $(Sf)(z):=zf(z), f \in H^2(\mathbb{D})$ be the unilateral shift on $H^2(\mathbb{D})$. The operator
\begin{equation*}
(Bf)(z):=-\left(\frac{1-z}{1+z}\right) f(z), \qquad z \in \mathbb{D}, f \in H^2(\mathbb{D}),
\end{equation*}
is the generator of the $C_0$-semigroup $(Q(t))_{t\geq0}$ given by (\ref{6:Qt}) and a simple calculation shows that 
\begin{equation*}
S = (I+B)(I-B)^{-1}
\end{equation*}
Hence, $S$ is the cogenerator of $(Q(t))_{t\geq0}$. By (\cite{Wynn2}, Theorem 3.8), there exists an observation operator $D \in H^2(\mathbb{D})^*$ which satisfies (\ref{DisRes}) but for which $D$ is not discrete $\alpha$-admissible for $S$. By Theorem \ref{4:PositiveTranslation2}, there exists $\tilde C \in \mathcal{L}(D(B),\mathbb{C})$ for which $\tilde C$ is not continuous $\alpha$-admissible for $B$, but $(B,\tilde C)$ satisfy (\ref{CtsRes}). The result follows since the weighted Weiss conjecture is preserved under unitary equivalence and $(Q(t))_{t\geq0}$ is unitarily equivalent to the right shift semigroup (\ref{6:RightShiftSemi}).
\end{proof}

\end{document}